\numberwithin{equation}{section}
\theoremstyle{plain}
\newtheorem{thm}{Theorem}[section]
\newtheorem{prop}[thm]{Proposition}
\theoremstyle{definition}
\theoremstyle{remark}
\newtheorem{rmk}[thm]{Remark}
\newcommand{\Hc}{{\mathcal{H}}}
\newcommand{\lp}{\left (}
\newcommand{\rp}{\right )}
\newcommand{\hx}{{x}}
\newcommand{\hy}{{y}}
\newcommand{\erf}{\mathrm{erf}}
\newcommand{\erfc}{\mathrm{erfc}}
\newcommand{\Dh}{\mathcal{D}^+}
\newcommand{\Dn}{\mathcal{D}^*}
\newcommand{\fn}{f^*}
\newcommand{\tTheta}{\tilde{\Theta}}
\newcommand{\tvartheta}{\tilde{\vartheta}}
\newcommand{\tc}{\tilde{c}}
\newcommand{\tvarphi}{\tilde{\varphi}}
\newcommand{\SL}{{\mathrm{SL}}}
\newcommand{\Nb}{\mathbb{N}}
\newcommand{\Zb}{\mathbb{Z}}
\newcommand{\Qb}{\mathbb{Q}}
\newcommand{\Cb}{\mathbb{C}}
\newcommand{\ef}{\mathfrak{e}}
\newcommand{\ebf}{\mathbf{e}}
\newcommand{\Rb}{\mathbb{R}}
\newcommand{\Dc}{{\mathcal{D}}}
\newcommand{\sgn}{\mathrm{sgn}}
\newcommand{\Fc}{{\mathcal{F}}}
\newcommand{\smat}[4]{\left(\begin{smallmatrix}
                 #1 & #2\\
                 #3 & #4
\end{smallmatrix}\right)}
\newcommand{\half}{{\tfrac{1}{2}}}
\newcommand{\varep}{\varepsilon}
\begin{document}
\title{Harmonic Eisenstein Series of Weight One}
\author{Yingkun Li}
\date{\today}
\thanks{This work was partially supported by the DFG grant BR-2163/4-2 and an NSF postdoctoral fellowship.}
\address{Fachbereich Mathematik,
Technische Universit\"at Darmstadt, Schlossgartenstrasse 7, D--64289
Darmstadt, Germany}
\email{li@mathematik.tu-darmstadt.de}

\maketitle

\begin{abstract}
In this short note, we will construct a harmonic Eisenstein series of weight one, whose image under the $\xi$-operator is a weight one Eisenstein series studied by Hecke \cite{Hecke26}.
\end{abstract}

\section{Introduction}
In the theory of automorphic forms, Eisenstein series occupy an important place. Holomorphic Eisenstein series can be explicitly constructed and are usually the first examples of modular forms people encounter. Furthermore, their constant Fourier coefficients are special values of the Riemann zeta function, whereas the non-constant coefficients are the sums of the powers of divisors. Modularity then connects these two types of interesting quantities together.

Holomorphic theta series constructed from positive definite lattices provide another source of modular forms besides Eisenstein series. 
In \cite{Siegel51}, Siegel introduced non-holomorphic theta series associated to indefinite lattices, and showed that they can be integrated to produce Eisenstein series.
Later in his seminal work \cite{Weil65}, Weil studied this phenomenon for algebraic groups, and deduced the famous Siegel-Weil formula.

In the setting of theta correspondence between the orthogonal and sympletic groups, the Siegel-Weil formula is an equality between the integral of a theta function on the orthogonal side and an Eisenstein series on the symplectic side. 
With the knowledge of the theta kernel, one can then construct various symplectic Eisenstein series. A prototypical example of such a construction was already carried out by Hecke around 1926 \cite{Hecke26}, where he constructed a theta kernel $\Theta(\tau, t)$ from an indefinite lattice of signature $(1, 1)$ and integrated it to produce a holomorphic modular form $\vartheta(\tau)$ of weight one. This is an Eisenstein series if the lattice is isotropic and a cusp form otherwise.
In \cite{Kudla81}, Kudla extended this construction to produce holomorphic Siegel modular forms of genus $g$ and weight $\frac{g+1}{2}$, as a prelude to the important works by Kudla and Millson later \cite{KM86, KM87}.

In this note, we will consider a different theta kernel $\tilde{\Theta}(\tau, t)$ for an isotropic, indefinite lattice of signature $(1, 1)$.
%%%%%%as a complement to the anisotropic case in \cite{CL16}.
Rather than holomorphic, its integral in $t$ is a harmonic function $\tilde{\vartheta}(\tau)$ and related to the holomorphic Eisenstein series $\vartheta(\tau)$ constructed by Hecke via
$$
\xi \tvartheta(\tau) = \vartheta(\tau),
$$
where $\xi = \xi_1$ is the differentiable operator introduced by Bruinier and Funke \cite{BF04}.
In the notion loc.\ cit., $\tvartheta(\tau)$ is a harmonic Maass form of weight one.
For any $k \in \half \Zb$, a harmonic Maass form of weight $k$ is a real analytic functions on the upper half-plane $\Hc := \{ \tau = u + i v: v > 0\}$ that transforms with weight $k$ with respect to a discrete subgroup of $\SL_2(\Rb)$, and is annihilated by the weight $k$ hyperbolic Laplacian 
$$
\Delta_k :=  - \xi_{2-k} \circ \xi_k, \quad
\xi_k(f) := 2i v^k \overline{\frac{\partial f}{\partial \overline{\tau}}}.
$$
Harmonic Maass forms can be written as the sum of a holomorphic part and a non-holomorphic part. 
The Fourier coefficients of their holomorphic parts are expected to contain interesting arithmetic information concerning the $\xi_k$-images of the non-holomorphic parts (see e.g.\ \cite{BO10, DIT11}).
%%%

In \cite{KRY99}, Kudla, Rapoport and Yang considered an Eisenstein series, which is harmonic. The Fourier coefficients of its holomorphic part are logarithms of rational numbers, and can be interpreted as the arithmetic degree of special divisors on an arithmetic curve. 
In view of their work and the Kudla program \cite{Kudla97}, we expect the Fourier coefficients of the harmonic Eisenstein series we construct to have a similar interpretation as well.

The idea to construct $\tvartheta(\tau)$ is rather straightforward. If we can construct a function $\tTheta(\tau, t)$ such that it is modular in $\tau$ and satisfies $\xi \tTheta(\tau, t) = \Theta(\tau, t)$ for each $t$, then simply integrating it in $t$ will produce the desirable $\tvartheta(\tau)$. This idea has already been used in \cite{BFI15}, where $\xi_{1/2}$ connected the theta kernels constructed from the Gaussian and the Kudla-Millson Schwartz form. 
In our setting, we will introduce an $L^\infty$ function $\tvarphi_\tau$, which is a $\xi$-preimage of the Schwartz function used in constructing $\Theta(\tau, t)$ under $\xi$ (see Prop.\ \ref{prop:tvarphi}). 
We will then use this function to form a theta kernel $\tTheta(\tau, t)$ and integrate it to obtain the harmonic Eisenstein series $\tvartheta(\tau)$ in Theorem \ref{thm:tvartheta} in the last section.

\subsection*{Acknowledgement.} The idea of the function $\tvarphi_\tau$ came out of discussions with Pierre Charollois during a visit to Universit\'{e} Paris 6 in November 2015. I am thankful for his encouragements that led to this note.

\section{Theta lift from $\mathrm{O}(1, 1)$ to $\SL_2$}
\label{sec:theta_lift}
In this section, we will recall the construction of the Eisenstein series in \cite{Hecke26} and \cite{Kudla81}. 
For $N \in \Nb$, let $L = N \Zb^2$ be a lattice with quadratic form $Q(\binom{a}{b}) := \frac{ab}{N}$ and $B(\cdot, \cdot) : L \times L \to \Zb$ the associated bilinear form. The dual lattice $L^* \subset V_\Qb := L \otimes \Qb$ is then $\Zb^2$ and the discriminant group is $L^*/L = (\Zb/N \Zb)^2$.

Let $\rho_L$ be the Weil representation of $\SL_2(\Zb)$ on $\Cb[L^*/L]$. 
As usual, let $\{\ef_h: h \in L^*/L \}$ denote the canonical basis of $\Cb[L^*/L]$ and $\ebf(a) := e^{2\pi i a}$ for any $a \in \Cb$.
Then the action of $\rho_L$ on the generators $T, S \in \SL_2(\Zb)$ is given by (see e.g.\ \cite[\textsection 4]{Borcherds98})
\begin{align*}
  \rho_L(T) (\ef_h) &= \ebf(Q(h)) \ef_h,  \;
\rho_L(S) (\ef_h) = \frac{1}{N} \sum_{\delta \in L^*/L} \ebf(-(\delta, h)) \ef_\delta.
\end{align*}
The symmetric domain attached to $V_\Rb := L \otimes \Rb$ is given by the hyperbola 
$$
\Dc := \left\{
Z \in V_\Rb | B(Z, Z) = -1 
\right\}.
$$
We denote one of its two connected components by $\Dc^+$ and parametrize it by
\begin{align*}
  \Phi: \Rb^\times_+ &\to \Dc^+ \\
t &\mapsto Z_t :=   \sqrt{\frac{N}{2}} \binom{t}{-1/t}.
\end{align*}
Let $W_t := \sqrt{N/2} \binom{ t}{1/ t} \in Z_t^\perp$.
Then $d \Phi \lp t \tfrac{d}{dt} \rp = W_t \in V_\Rb$ and $\{W_t, Z_t\}$ is an orthogonal basis of $V_\Rb$. 
For any $X = \binom{x_1}{x_2} \in V_\Rb$, one can write $X = X_{W_t} + X_{Z_t}$, where
$$
X_{W_t} := B(X, W_t) W_t = \frac{t^{-1} x_1 + t x_2}{\sqrt{2N}} W_t, \; X_{Z_t} := - B(X, Z_t) Z_t  =  \frac{t^{-1} x_1 - t x_2}{\sqrt{2N}}  Z_t.
$$
Then the majorant of $B(\cdot, \cdot)$ associated to $Z_t$, denoted by $B(\cdot, \cdot)_t$, is given by the positive definite quadratic form
$$
Q(X)_t := Q(X_{W_t}) - Q(X_{Z_t}) = \frac{B(X, W_t)^2 + B(X, Z_t)^2}{2} = \frac{t^{-2} x_1^2 + t^2 x_2^2}{2N}.
$$

Let $\Rb^{1, 1} = \{(x, y): x, y \in \Rb\}$ be a quadratic space of signature $(1, 1)$ with the quadratic form $Q'((x, y)) = \frac{x^2 - y^2}{2}$ with associated bilinear form $B'(\cdot, \cdot)$.
Given $\tau = u + iv \in \Hc$ in the upper half plane, we define the Schwartz function $\varphi_\tau$ on $\Rb^{1, 1}$ by
\begin{equation}
  \label{eq:varphi}
  \begin{split}
      \varphi_\tau: \Rb^{1, 1} &\to \Cb \\
(x, y) &\mapsto  \sqrt{2v} \cdot  x \cdot \ebf \lp \frac{x^2}{2} \tau - \frac{y^2}{2} \overline{\tau} \rp.
  \end{split}
\end{equation}
Now summing $\varphi_\tau$ over any even, integral lattice $M \subset \Rb^{1, 1}$ of rank 2 would produce a real-analytic theta series of weight 1 that transforms with respect to $\rho_M$.
In our setting, we let $M$ be the image of $L$ under the following isometry
\begin{equation}
  \label{eq:iotat}
  \begin{split}
      \iota_t: V_{\Rb} & \to \Rb^{1, 1} \\
X &\mapsto (B(X, W_t), B(X, Z_t))
  \end{split}
\end{equation}
for each $t \in \Rb^\times_+$.
Now the vector-valued theta function 
\begin{equation}
  \label{eq:Theta}
  \Theta(\tau, t) := \sum_{h \in L^*/L}  \Theta_{h}(\tau, t) \ef_h, \;
\Theta_{h}(\tau, t) := \sum_{X \in L + h} \varphi_{\tau}(\iota_t(X))
\end{equation}
transforms with weight 1 and representation $\rho_L$ in the variable $\tau$ by Theorem 4.1 in \cite{Borcherds98}.
For $h = \binom{h_1}{h_2} \in \Zb^2$, we have explicitly
$$
\Theta_h(\tau, t) = \sqrt{\frac{v}{N}} \sum_{\begin{subarray}{c} x_1 \equiv h_1 (N) \\ x_2 \equiv h_2 (N) \end{subarray}}
(t^{-1} x_1 + tx_2) \ebf \lp
\frac{x_1 x_2}{N} u + \frac{t^{-2} x_1^2 + t^2 x_2^2}{2N} iv
\rp
$$
Integrating over $t \in \Rb^\times_+$ with respect to the invariant differential $\frac{dt}{t}$ defines
\begin{equation}
\label{eq:vartheta}
\vartheta_h(\tau, s) := 
\int^1_0 \Theta_h(\tau, t) t^{s} \frac{dt}{t}  + \int^\infty_1 \Theta_h(\tau, t) t^{-s} \frac{dt}{t} .
\end{equation}
Here, the integral converges for $\Re s \gg 0$. As a function of $s$, it has analytic continuation to $s \in \Cb$. 
Let $\vartheta_h(\tau)$ be the constant term in the Laurent expansion of $\vartheta_h(\tau, s)$ around $s = 0$. 
Then $\vartheta_h(\tau)$ is holomorphic and $\vartheta(\tau) := \sum_{h \in L^*/L} \vartheta_h(\tau) \ef_h$ is an Eisenstein series of weight 1 on $\SL_2(\Zb)$ and transforms with respect to $\rho_L$. 
It has the following Fourier expansion (see \cite[Theorem 3.2]{Kudla81}).
\begin{prop}
  \label{prop:FE}
Write $h = \binom{h_1}{h_2} \in \Zb^2$. 
Then $\vartheta_h(\tau)$ has the Fourier expansion 
$\vartheta_h(\tau) = \sum_{n \in \Qb_{\ge 0}} c_h(n) q^n$ with
\begin{equation}
  \label{eq:FC}
  \begin{split}
c_h(0) &:= 
\begin{cases}
  \frac{1}{2} - \langle \frac{h_1}{N} \rangle & N \nmid h_1, n \mid h_2, \\
  \frac{1}{2} - \langle \frac{h_2}{N} \rangle & N \mid h_1, n \nmid h_2, \\
0 & \mathrm{otherwise,}
\end{cases}\\
    c_h(n) &:= \sum_{X = \binom{x_1}{x_2} \in L + h, \;
Q(X) = n}
\sgn(x_1), \qquad n > 0.
  \end{split}
\end{equation}
Here $\langle x \rangle \in (0, 1]$ is defined by the property $x - \langle x \rangle  \in \Zb$.
\end{prop}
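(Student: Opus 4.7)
The plan is to carry out the $t$-integral in \eqref{eq:vartheta} termwise, splitting the lattice sum in \eqref{eq:Theta} according to whether $x_1 x_2$ is positive, negative, or zero. The key algebraic identity $t^{-2}x_1^2 + t^2 x_2^2 = (t^{-1}x_1 \mp tx_2)^2 \pm 2 x_1 x_2$ rewrites each non-degenerate summand as $\sqrt{v/N}\,(t^{-1}x_1 + tx_2)\, q^{x_1 x_2/N}\, e^{-\pi v(t^{-1}x_1 - tx_2)^2/N}$ when $x_1 x_2 > 0$, and as the analogous expression with $\bar{q}^{-x_1 x_2/N}$ and Gaussian in $(t^{-1}x_1 + tx_2)$ when $x_1 x_2 < 0$. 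This isolates the holomorphic, anti-holomorphic, and degenerate contributions cleanly before any integration is attempted.

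For $n > 0$, only the summands with $x_1 x_2 = Nn$ survive. For each such $(x_1, x_2)$, the integrand decays exponentially at both $t = 0$ and $t = \infty$, so the regularization is unnecessary: one may set $s = 0$ and combine the two integrals into a single integral over $(0, \infty)$. Under the monotone substitution $y = t^{-1}x_1 - tx_2$, which satisfies $-dy = t^{-1}(t^{-1}x_1 + tx_2)\,dt$, this becomes
\[
\sgn(x_1)\int_{-\infty}^\infty \sqrt{v/N}\, e^{-\pi v y^2/N}\, dy \;=\; \sgn(x_1),
\]
where the sign records the orientation in which $y$ traverses $\Rb$ as $t$ runs over $(0,\infty)$ (this depends on whether both $x_i$ are positive or both negative). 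Summing over $X \in L + h$ with $Q(X) = n$ immediately yields $c_h(n) = \sum \sgn(x_1)$.

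For the constant term $c_h(0)$, three sources arise: the degenerate lattice vectors with $x_1 = 0 \neq x_2$ (possible only if $N \mid h_1$), those with $x_2 = 0 \neq x_1$ (possible only if $N \mid h_2$), and those with $x_1 x_2 < 0$. The degenerate summands are $u$-independent; a direct Mellin-transform computation of $\sqrt{v/N}\sum_{x_2 \equiv h_2} x_2 \bigl[\int_0^1 + \int_1^\infty\bigr] t\, e^{-\pi v t^2 x_2^2/N}\, t^{\pm s - 1}\,dt$ expresses the regularized sum as a difference of Hurwitz zeta values; using $\zeta(0, \alpha) = \tfrac{1}{2} - \alpha$, this evaluates at $s = 0$ to $\tfrac{1}{2} - \langle h_2/N\rangle$ when $N \nmid h_2$ and to $0$ when $N \mid h_2$, with a symmetric statement for the $x_2 = 0$ branch. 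The main technical obstacle is to show that the regularized contribution of the $x_1 x_2 < 0$ family vanishes at $s = 0$, thereby ensuring the holomorphicity of $\vartheta_h(\tau)$ and eliminating any spurious negative-frequency Fourier coefficient. The cleanest route is the involution $t \mapsto 1/t$, which exchanges the $(0,1)$ and $(1,\infty)$ halves of \eqref{eq:vartheta} and simultaneously swaps the roles of $x_1$ and $x_2$ (yielding the identity $I^+_s(x_1,x_2) = I^-_s(x_2,x_1)$ on each individual integral); together with a Mellin-transform identity for the resulting symmetrized integrand, this forces the $s = 0$ constant term of each block with fixed $x_1 x_2 < 0$ to vanish. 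Collecting the three contributions case-by-case reproduces the piecewise formula for $c_h(0)$ in \eqref{eq:FC}, completing the argument.
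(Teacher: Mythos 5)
Your overall strategy is the same as the paper's (the paper's proof is only a two-line hint): split the lattice sum by the sign of $x_1x_2$, evaluate the $t$-integral of each anisotropic term via a Gaussian identity (your substitution $y=t^{-1}x_1-tx_2$ is exactly the paper's identity $e^{-2\pi y}=\sqrt{y}\int_0^\infty e^{-\pi y(t^2+t^{-2})}(t+t^{-1})\tfrac{dt}{t}$ after rescaling), and handle the isotropic vectors with the Hurwitz zeta value $H(0,x)=\tfrac12-x$. Those two pieces of your argument are correct and give $c_h(n)$ for $n>0$ and the piecewise formula for $c_h(0)$.

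The one step that does not work as written is your treatment of the $x_1x_2<0$ family. The involution $t\mapsto 1/t$ swaps $B(X,W_t)$ and $B(X,Z_t)$ only at the cost of replacing $X=\binom{x_1}{x_2}$ by $\binom{x_2}{x_1}$, which lies in $L+\binom{h_2}{h_1}$, generally a \emph{different} coset; so the pairing $I^+_s(x_1,x_2)=I^-_s(x_2,x_1)$ relates $\vartheta_h$ to $\vartheta_{h'}$ rather than producing a cancellation inside the fixed component $\vartheta_h$, and the appeal to an unspecified ``Mellin-transform identity for the symmetrized integrand'' does not close this. The correct (and simpler) argument keeps $X$ fixed: rescale $w=|x_2/x_1|^{1/2}t$, so that $t^{-1}x_1+tx_2=\sqrt{|x_1x_2|}\,\sgn(x_1)(w^{-1}-w)$ and the integrand becomes a constant times $(w^{-1}-w)\,e^{-\pi v|x_1x_2|(w^{-1}-w)^2/N}\,\tfrac{dw}{w}$, which is odd under $w\mapsto 1/w$ while $\tfrac{dw}{w}$ is invariant; hence each such term integrates to $0$ at $s=0$ (and is entire in $s$, so contributes nothing to the constant term). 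This is precisely the identity $\int_0^\infty \sgn(w^{-1}-w)\,f(|w^{-1}-w|)\,\tfrac{dw}{w}=0$ that the author invokes again in the proof of Theorem 4.3. With that substitution your argument is complete.
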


\begin{proof}
  Use the identity $e^{-2\pi y} = \sqrt{y} \int^\infty_{0} e^{-\pi y (t^2 + t^{-2})} (t + t^{-1}) \tfrac{dt}{t}$ and $H(0, x) = \frac{1}{2} - x$, where $H(s, x) := \sum_{n = 0}^\infty (x + n)^{-s}$ is the Hurwitz zeta function.
\end{proof}

\iffalse
\subsection{Zwegers' Theta Series}
In \cite{Zw00}, Zwegers studied theta functions defined using lattice of signature $(r-1, 1)$. He specialized to the case $ r= 2$. The resulting weight 1 real-analytic theta series turns out to be the product of a harmonic Maass form and a holomorphic modular form, both of weight $\half$.
\fi

\section{Some Special Functions}
\label{sec:special_fn}
In this section, we will introduce a special function $\tvarphi_\tau$ on $\Rb^{1, 1}$ such that $\xi \tvarphi_\tau = \varphi_\tau$.
\subsection{Non-holomorphic Part}
Define the functions $\fn_\tau : \Rb \to \Rb$ and $\varphi^*_\tau: \Rb^{1, 1} \to \Cb$ by
\begin{equation}
  \label{eq:nonhol_part}
\begin{split}
\fn_\tau(x) &:=   \sgn(x) - \erf \lp \sqrt{2 \pi v} x \rp = \sgn(x) \erfc(\sqrt{2\pi v}|x|), \;
\varphi^*_\tau(x, y) := \ebf \lp \frac{y^2 - x^2}{2} \tau \rp \fn_\tau(x).
\end{split}
\end{equation}
where $\erf(x) := \tfrac{2}{\sqrt{\pi}} \int^x_0 e^{-r^2 }dr$ and $\erfc(x)$ are the error and complementary functions respectively.
Straightforward calculations show that 
\begin{equation}
  \label{eq:xi_image}
  \xi(\varphi^*_\tau(x, y) ) = -\varphi_{\tau}(x, y)
\end{equation}
for all $(x, y) \in \Rb^{1, 1}$.

For each $y \in \Rb$, the function $\varphi^*_\tau(x, y)$ decays like a Schwartz function in $x$.
Also, $\varphi^*_\tau(x, y)$ satisfies 
$$
\lim_{x \to 0^+} \varphi^*_\tau(x, y)  - \lim_{x \to 0^-} \varphi^*_\tau(x, y) = 2 \ebf \lp \frac{y^2}{2} \tau \rp,
$$
hence has a jump discontinuity at $x = 0$.
Away from 0, it is smooth.
Thus, we can view it as a tempered distribution on $\Rb^{1, 1}$ and calculate its Fourier transform with respect to $-Q'$ as follows.

First, notice that as a distribution, $\fn_\tau$ satisfies the differential equation
$$
\frac{d}{dx} \lp \fn_\tau(x)
\rp
=2 \cdot \delta(x) -2\sqrt{2v}  e^{-2 \pi v x^2},
$$
where $\delta(x)$ is the Dirac delta function.
This follows from $\tfrac{d}{dx} |x| = \sgn(x), \tfrac{d}{dx} \sgn(x) = 2 \delta(x)$ as tempered distributions.
Substituting in the definition of $\varphi^*_\tau(x, y)$, we see that it satisfies
\begin{equation}
\label{eq:diff_eq_1}
\frac{\partial }{ \partial x}\lp \varphi^*_\tau(x, y) \rp + 2\pi i {\tau} x  \varphi^*_\tau(x, y) 
= 
\lp 2 \delta(x)
-
2\sqrt{2v}  \ebf \lp- \frac{x^2}{2} \overline{\tau} \rp \rp \ebf \lp \frac{y^2}{2} \tau \rp.
\end{equation}
Notice that $\ebf \lp - \tfrac{x^2}{2} {\tau}  \rp 2 \delta(x) = 2\delta(x)$ as a distribution.

For a Schwartz function $\phi$ on $\Rb^{1, 1}$, we define its Fourier transform $\Fc(\phi)$ with respect to the quadratic form $-Q'$ by
\begin{equation}
  \label{eq:FT}
  \Fc(\phi)(x, y) := \int_{\Rb^{1, 1}} \phi(w, z) \ebf(-wx + yz) dw dz.
\end{equation}
If $\phi$ is not a Schwartz function and the integral above converges, we also use it to denote its Fourier transform.
Using the standard facts of Fourier transform (see e.g.\ \cite[Lemma 3.1]{Borcherds98}), we have
$$
- {\tau}  \frac{\partial}{\partial \hx} \Fc(\varphi^*_\tau)(\hx, \hy) 
+ 2\pi i \hx \Fc(\varphi^*_\tau)(\hx, \hy) 
= 
\lp 2 - 2 \sqrt{2 v} \lp i\overline{\tau} \rp^{-1/2}
 \ebf \lp - \frac{\hx^2}{2} \overline{ \lp - 1/\tau \rp} 
 \rp \rp
\frac{ \ebf \lp \frac{y^2}{2} (-1/\tau) \rp}{\sqrt{-i \tau}}.
$$
After dividing by $-\tau$ on both sides and making the change of variable $\tau \mapsto -1/\tau$, the equation becomes
\begin{align*}
 \frac{\partial}{ \partial \hx} \Fc(\varphi^*_{-1/\tau})(\hx, \hy)   &+
2\pi i \hx \tau \Fc(\varphi^*_{-1/\tau})\lp \hx, \hy \rp 
= 
-  {2\tau} \lp \sqrt{2v }  \ebf \lp - \frac{\hx^2}{2} \overline{\tau }  \rp
- \sqrt{-i\tau} \rp \ebf \lp \frac{\hy^2}{2} \tau \rp,
\end{align*}

Now define
\begin{equation}
  \label{eq:Dn_diff}
  \Dn_\tau(x, y) :=  \varphi^*_\tau(x, y) - \frac{\Fc(\varphi^*_{-1/\tau})(x, y)}{\tau} .
\end{equation}
Then it satisfies the differential equation 
\begin{equation}
  \label{eq:D_diff}
 \ebf \lp - \frac{x^2}{2} \tau  \rp \frac{d}{dx} 
\lp \ebf \lp \frac{x^2}{2} \tau  \rp \Dn_\tau(x, y) \rp  
= 
2 \lp \delta(x) - \sqrt{-i\tau} \rp
\ebf \lp \frac{y^2}{2} \tau \rp.
\end{equation}
We have the following result concerning the solutions to this differential equation.
\begin{prop}
  \label{prop:diff_Dn}
For fixed $\tau_0 \in \Hc, y_0 \in \Rb$, the only jump discontinuity of any piecewise continuous solution to the differential equation \eqref{eq:D_diff} is at $x = 0$. 
Suppose further that it is bounded in $x$. 
Then the solution agrees with the function $\Dc_{\tau_0}(x,y_0)$ defined by
\begin{equation}
  \label{eq:Dc}
  \Dc_{\tau_0}(x, y_0) := 
 \ebf \lp \frac{y_0^2 - x^2}{2} \tau_0 \rp 
\sgn(x) \erfc(\sqrt{-i \tau_0} |x|)
\end{equation}
whenever the solution is continuous.
In particular, $\Dn_\tau(x, y) = \Dc_\tau(x, y)$ for all $(x, y) \in \Rb^{1, 1}$ and $\tau \in \Hc$. 
\end{prop}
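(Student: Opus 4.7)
The plan is to transform (3.7) into a first-order distributional ODE, analyze it on each half-line separately, and then pin down the bounded solution by asymptotics at infinity.

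For any piecewise continuous solution $D$, set $F(x) := \ebf(x^2\tau_0/2)\, D(x, y_0)$. Multiplying (3.7) through by $\ebf(x^2\tau_0/2)$ and using $\ebf(x^2\tau_0/2)\delta(x) = \delta(x)$, the equation becomes
$$
F'(x) \;=\; 2\ebf(y_0^2\tau_0/2)\bigl(\delta(x) - \sqrt{-i\tau_0}\,\ebf(x^2\tau_0/2)\bigr).
$$
Away from $x = 0$ the right-hand side is smooth, so $F$, and hence the solution $D$, is smooth on $(-\infty, 0) \cup (0, \infty)$; this establishes that the only possible jump discontinuity sits at the origin. Integrating the $\delta$-term across $0$ pins down the size of the jump of $F$ there to be $2\ebf(y_0^2\tau_0/2)$, independent of the particular solution.

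For uniqueness among bounded solutions, let $D_1, D_2$ be two bounded piecewise continuous solutions; then $D := D_1 - D_2$ yields an $F$ that is locally constant on each half-line with zero jump at $0$ (the forced jumps cancel), so $F \equiv C$ is a single constant. This forces $D(x, y_0) = C\,\ebf(-x^2\tau_0/2)$, whose magnitude is $|C|\, e^{\pi v_0 x^2}$ with $v_0 := \Im \tau_0 > 0$, bounded only when $C = 0$. Hence a bounded solution is unique at every point of continuity.

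Next I would verify directly that $\Dc_{\tau_0}(x, y_0)$ is a bounded solution. Writing $G(x) := \ebf(x^2\tau_0/2)\Dc_{\tau_0}(x, y_0) = \ebf(y_0^2\tau_0/2)\,\sgn(x)\,\erfc(\sqrt{-i\tau_0}\,|x|)$, the distributional derivative splits as $2\ebf(y_0^2\tau_0/2)\delta(x)$ from the jump of $\sgn$ plus the smooth piece obtained from $\erfc'(z) = -2e^{-z^2}/\sqrt\pi$ via the chain rule, recovering exactly the reduced ODE once the branch of $\sqrt{-i\tau_0}$ and the $\sqrt\pi$ factors are aligned. Boundedness in $x$ follows from the standard asymptotic $\erfc(z) \sim e^{-z^2}/(z\sqrt\pi)$ as $|z| \to \infty$ in the right half-plane (applicable since $-i\tau_0$ lies there): the $e^{-z^2}$ piece cancels the $e^{\pi v_0 x^2}$ growth of $\ebf(-x^2\tau_0/2)$, leaving $O(1/|x|)$ decay.

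Finally, to deduce $\Dn_\tau = \Dc_\tau$ identically on $\Rb^{1,1} \times \Hc$, I would check that $\Dn_\tau(x, y)$ is itself bounded in $x$. The first summand $\varphi^*_\tau(x, y)$ is Schwartz-decaying in $x$ for fixed $y$. For the second summand $\Fc(\varphi^*_{-1/\tau})(x, y)/\tau$, the hypothesis $-1/\tau \in \Hc$ makes the factor $\ebf(z^2(-1/\tau)/2)$ in $\varphi^*_{-1/\tau}(w, z)$ provide Gaussian decay in $z$ as well, so $\varphi^*_{-1/\tau}$ is Schwartz in both variables and its Fourier transform is therefore Schwartz and bounded. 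Uniqueness then forces $\Dn_\tau = \Dc_\tau$ on each half-line, and since the forced jump at $0$ is the same for both, equality holds pointwise on all of $\Rb^{1,1}$. The delicate step is the distributional verification that $\Dc_{\tau_0}$ satisfies (3.7): one must carefully track the principal branch of $\sqrt{-i\tau_0}$, the chain-rule constants in $\erfc'$, and the $\delta$-contribution from the jump of $\sgn$, matching them simultaneously against both the singular and smooth parts of the right-hand side.
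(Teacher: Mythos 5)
Your proposal follows the paper's argument essentially step for step: reduce to the first-order ODE for $\ebf(x^2\tau_0/2)D$, observe that any jump away from $0$ would create a spurious $\delta(x-x_0)$, note that two bounded solutions differ by $C\,\ebf(-x^2\tau_0/2)$ which grows like $e^{\pi v_0 x^2}$ unless $C=0$, and then apply this to $\Dn_\tau$ after checking its boundedness. One justification in your last paragraph is wrong, though the conclusion survives: $\varphi^*_{-1/\tau}$ is \emph{not} Schwartz in both variables, because $\fn_{-1/\tau}(w)=\sgn(w)\erfc(\sqrt{2\pi v'}|w|)$ has a jump of size $2$ at $w=0$, so $\varphi^*_{-1/\tau}$ is discontinuous and its Fourier transform is certainly not Schwartz (it decays only like $1/\hx$, consistent with \eqref{eq:Dn_diff} having a genuine jump at $x=0$). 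What you actually need, and what the paper uses, is only that $\varphi^*_{-1/\tau}$ lies in $L^1$ (its Gaussian decay in both variables gives this), whence its Fourier transform is continuous and bounded; that already suffices to run the uniqueness argument and to see that $\Dn_\tau$ has no extraneous discontinuities away from $x=0$.
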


\begin{rmk}
  Here in $\tau_0 = u_0 + iv_0$, the function $\erfc(\sqrt{-i\tau_0}|x|)$ is the unique holomorphic extension of $\erfc(\sqrt{v_0}|x|)$. 
\end{rmk}

\begin{proof}
The first claim is clear as a jump discontinuity at $x = x_0$ of a piecewise solution would produce a constant times $\delta(x - x_0)$.
By the fundamental theorem of calculus, the solution, whenever continuous, would agree with $\Dc_{\tau_0}(x,y_0)$ up to a constant multiple of $\ebf \lp - \frac{x^2}{2} \tau \rp$, which is unbounded as $x \to \infty$.
Since $\Dc_{\tau_0}(x ,y_0)$ is assumed to be bounded, the second claim follows.
Finally, for any fixed $\tau_0 \in \Hc, y_0 \in \Rb$, we have $\varphi^*_{\tau_0}(x, y_0) \in L^1(\Rb)$. 
Thus its Fourier transform is continuous and bounded. 
That implies $\Dn_\tau(x, z)$ is bounded and has no removable discontinuity on $\Rb$, hence the third claim.
\end{proof}

\subsection{Holomorphic Part}
Now, we will define the holomorphic counterpart to $\varphi^*_\tau$ as
\begin{equation}
  \label{eq:hol_part}
  \varphi^+_\tau(x, y) := 
\ebf \lp \frac{y^2 - x^2}{2} \tau \rp \sgn(x) \mathds{1}_{y^2 > x^2},
\end{equation}
where $\mathds{1}_{y^2 > x^2}$ is the characteristic function of the set $\{(x, y) \in \Rb^{1, 1}: y^2 > x^2 \}$.
Even though $\varphi^+_\tau(x, y)$ is not a Schwartz function, it decays nicely enough such that we have the following result.
\begin{prop}
  \label{prop:FT_converge}
The following integral 
$$
\Fc(\varphi^+_\tau)(\hx, \hy) := \int_{\Rb^{1, 1}} \varphi^+_\tau(w, z) \ebf(- w \hx + z \hy) dw dz
$$
converges uniformly on compact subsets of $\{(\hx, \hy) \in \Rb^{1, 1}: \hx^2 \neq \hy^2 \}$.
Furthermore, the function $\Fc(\varphi^+_\tau)$ is bounded and continuously differentiable on $\{(\hx, \hy) \in \Rb^{1, 1}: \hx^2 \neq \hy^2 \}$.
\end{prop}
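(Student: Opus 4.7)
The plan is to pass to the null coordinates $p = z+w$, $q = z-w$ on $\Rb^{1,1}$, with Jacobian $\tfrac12$. In these coordinates the support region $z^2 > w^2$ becomes $pq > 0$, the Gaussian phase becomes $\ebf(pq\tau/2)$, the sign factor $\sgn(w)$ becomes $\sgn(p-q)$, and the Fourier kernel rewrites as $\ebf(\tfrac12(p(\hy-\hx) + q(\hy+\hx)))$. Setting $a = (\hy - \hx)/2$ and $b = (\hy + \hx)/2$, the hypothesis $\hx^2 \neq \hy^2$ is exactly $ab \neq 0$; by the $(p,q) \mapsto (-p,-q)$ symmetry of the integrand it is enough to treat the quadrant $p,q > 0$, the other half being its complex conjugate.

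Inside that quadrant I would iterate the integral and evaluate the inner $p$-integral in closed form. For fixed $q > 0$ the integrand in $p$ is $\sgn(p-q)\,e^{2\pi i p(q\tau/2 + a)}$; since $\operatorname{Im}(q\tau/2 + a) = qv/2 > 0$, the half-line integral $\int_q^\infty e^{2\pi i p(q\tau/2 + a)} \, dp$ converges absolutely to a simple quotient, producing after multiplication by $e^{2\pi i b q}$ a contribution of the shape $-e^{i\pi q^2\tau + 2\pi i \hy q}/(i\pi(q\tau+2a))$. A direct computation shows that $|q\tau + 2a|$ is bounded below by a positive constant depending only on $\tau$ and $|a|$ for every $q \geq 0$, while the Gaussian factor $e^{-\pi q^2 v}$ makes the resulting outer $q$-integral absolutely convergent. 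The complementary range $\int_0^q$ produces a Gaussian piece of the same type together with a boundary term proportional to $\int_0^\infty e^{2\pi i b q}/(q\tau + 2a) \, dq$; this last integral is only conditionally convergent, but $b \neq 0$ allows a single integration by parts (differentiating the $(q\tau+2a)^{-1}$ factor) to turn it into an absolutely convergent integral with a $(q\tau+2a)^{-2}$ weight plus a harmless boundary term at $q=0$ proportional to $1/(ab)$.

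Uniform convergence and boundedness on a compact $K \subset \{\hx^2 \neq \hy^2\}$ then follow at once: on $K$ there are uniform positive lower bounds $|a|, |b| \geq c_K$, so the dominating functions extracted above are integrable independently of $(\hx, \hy) \in K$. Continuous differentiability on the same open set is established by differentiating under the integral sign; each partial derivative in $\hx$ or $\hy$ brings down a linear factor of $p$ or $q$ from the Fourier kernel, and after the inner $p$-integration this contributes only polynomial weight which is harmless against the Gaussian and $(q\tau+2a)^{-2}$ factors. The main obstacle I anticipate is precisely the conditionally convergent boundary integral: everything else is dominated pointwise by the Gaussian $e^{-\pi q^2 v}$ and handled by dominated convergence, but the boundary term requires the integration by parts to be carried out carefully enough to keep the constants, including the explicit $1/(ab)$ at $q = 0$, depending continuously and continuously differentiably on $(\hx,\hy) \in K$.
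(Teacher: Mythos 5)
Your change to null coordinates, the explicit evaluation of the inner integral using $\operatorname{Im}(q\tau/2+a)>0$, and the separation into an absolutely convergent Gaussian piece plus a conditionally convergent boundary piece is essentially the paper's route (the paper rotates by $A=\tfrac1{\sqrt2}\smat{1}{1}{-1}{1}$ instead of using $p=z+w,\ q=z-w$, and integrates over finite boxes first, but that is cosmetic). The genuine gap is in the boundedness claim. The proposition asserts that $\Fc(\varphi^+_\tau)$ is bounded on all of $\{\hx^2\neq\hy^2\}$ --- an unbounded set whose closure meets the light cone $ab=0$ --- and this \emph{global} bound is precisely what is used afterwards: it is the hypothesis in Proposition \ref{prop:diff_Dn} that rules out adding a multiple of $\ebf(-\tfrac{x^2}{2}\tau)$ and forces $\Dh_\tau=\Dc_\tau$. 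Your argument delivers only $|a|,|b|\ge c_K$ on a compact $K$, and your integration by parts bounds the boundary term by a quantity of size $1/|ab|$, which blows up as $(\hx,\hy)$ approaches the light cone; nothing in the proposal controls the integral uniformly as $ab\to0$ or as $|(\hx,\hy)|\to\infty$. So as written you have proved a weaker statement than the one needed.

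To close the gap you should treat the boundary term the way the paper does: evaluate the one-dimensional Fourier transform $\int^\infty_{-\infty}\frac{\ebf(ay')}{2\pi i(a\tau+x')}\,da=\tau^{-1}\ebf\bigl(x'y'(-1/\tau)\bigr)\mathds{1}_{x'y'>0}$ in closed form, which is bounded by $|\tau|^{-1}$ on the whole set because $\operatorname{Im}(-1/\tau)>0$ and the exponential is supported where $x'y'>0$. For the Gaussian piece one likewise needs a bound uniform both for large $|x'|$ (where $\int e^{-r^2}(r^2+(x')^2)^{-1/2}\,dr\ll|x'|^{-1}$ suffices) and near $x'=0$, where the paper splits off $|a|<\epsilon$ and rewrites the integrand as $C_1\frac{x'}{(a\tau)^2-(x')^2}+C_2\frac{\sin(ay')}{a}$ with $C_j$ uniformly bounded. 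The closed-form evaluation also repairs a soft spot in your $C^1$ argument: after one integration by parts the differentiated boundary integral still behaves like $\int q^{-1}e^{2\pi ibq}\,dq$ at infinity, so differentiation under the integral sign is not yet justified there. Finally, a small slip: the quadrant $p,q<0$ does not contribute the complex conjugate of the quadrant $p,q>0$, since $\ebf(pq\tau/2)$ is invariant (not conjugated) under $(p,q)\mapsto(-p,-q)$; it contributes the same integral with $(\hx,\hy)$ negated and a sign, which is handled identically but should be stated correctly.
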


\begin{proof}
Let $A = \tfrac{1}{\sqrt{2}} \smat{1}{1}{-1}{1}$ and make the rotational change of variables
$$
\binom{a}{b} := A \cdot \binom{w}{z}, \;
\binom{x'}{y'} := A \cdot \binom{x}{y},
$$
we can rewrite the integral above as
$$
\Fc(\varphi^+_\tau)(x', y') = \lim_{T, T' \to \infty} \int_{-T'}^{T'} \int^{T}_{-T} \mathds{1}_{ab > 0} \ebf(ab \tau) \sgn(a - b) \ebf(ay' + bx') db da.
$$
The integral over $0 < a < T', 0 < b < T$ can be evaluated explicitly as
$$
\int^{T'}_0 \frac{\ebf(a^2 \tau + a(x' + y'))}{\pi i (a \tau + x')} - \frac{\ebf((a\tau + x')T + ay')}{2\pi i (a \tau + x')} - \frac{\ebf(a y')}{2\pi i (a \tau + x')} da.
$$
The same can be done for the region $-T' < a < 0, -T < b < 0$.
As $T \to \infty$, the middle term vanishes, and we are left with
\begin{equation}
\label{eq:FT_lim}
\Fc(\varphi^+_\tau)(x', y') = 
\lim_{T' \to \infty} \int^{T'}_{-T'}   \frac{\ebf(a^2 \tau + a(x' + y'))}{\pi i (a \tau + x')} - \frac{\ebf(a y')}{2\pi i (a \tau + x')} da.
\end{equation}
The integral of the first term can be bounded with $\int^\infty_{-\infty} \frac{e^{-r^2}}{\sqrt{r^2 + (x')^2}} dr \ll |x'|^{-1}$, which implies that the integral converges uniformly and defines a continuously differentiable function away from $x'y' = 0$.
Furthermore, it is bounded when $|x'|$ is large. 
On the other hand when $|x'|$ is close to zero, we can fix an absolute constant $\epsilon > 0$ such that the integral over $|a| \in (\epsilon , \infty)$ converges absolutely independent of $x', y'$. 
The rest of the integrand can be written as 
$$
\frac{\ebf( a^2 \tau + a(x' + y'))}{a \tau + x'}  + \frac{\ebf(a^2 \tau - a(x' + y'))}{- a \tau + x'}  
=
C_1(a, x', \tau) \frac{x'}{(a\tau)^2 - (x')^2} 
+ 
C_2(a, x', \tau) \frac{\sin(ay')}{a} 
$$
with $|C_j(a, x', \tau)|$ bounded above independently of $a$ and $x'$. Since $|\int^\epsilon_0 \frac{x'}{(a\tau)^2 - (x')^2} da |$ and $|\int^\epsilon_0 \frac{\sin(ay')}{a} da  |$ are bounded independent of $x'$ and $y'$, the integral of the first term in Eq.\ \eqref{eq:FT_lim} defines a bounded and continuously differentiable function on $\{(x, y) \in \Rb^{1, 1}: x^2 \neq y^2\}$.

Away from $x'y' = 0$, the integral of the last term converges uniformly using integration by parts and defines a continuous function. 
Using standard formula in one dimensional Fourier transform, we can in fact evaluate it explicitly as
$$
\int^\infty_{-\infty} \frac{\ebf(a y')}{2\pi i (a \tau + x')} da
= \tau^{-1}  \ebf(x'y' (-1/\tau)) \mathds{1}_{x'y' > 0}.
$$
From this, it is clear that it is bounded.
%%%
\end{proof}

Since $\varphi^+_\tau$ is bounded, integrating against it defines a tempered distribution on $\Rb^{1, 1}$.
Thus, we can then study its Fourier transform $\Fc(\varphi^+_\tau)$ as we have done for $\varphi^*_\tau$. 
The analogue to equation \eqref{eq:diff_eq_1} is as follows
\begin{equation}
\label{eq:diff_eq_2}
\frac{\partial  \varphi^+_\tau }{ \partial x}
 + 2\pi i {\tau} x  \varphi^+_\tau
= 
\lp 2 \delta(x) - \delta(x - y) - \delta(x + y)  \rp \ebf \lp \frac{y^2 - x^2}{2} \tau \rp.
\end{equation}
Applying Fourier transform to both sides and making the change $\tau \mapsto -1/\tau$
 yields
$$
(-1/\tau)  \frac{\partial  \Fc(\varphi^+_{-1/\tau})( \hx, \hy)}{\partial \hx}
- 2\pi i \hx \Fc(\varphi^+_{-1/\tau}) (\hx, \hy)
= 
-\lp 2 {\sqrt{-i \tau}}
{ \ebf \lp \frac{\hy^2}{2} \tau \rp}
- \delta(\hy - \hx) - \delta(\hx + \hy)\rp
$$
Subtracting the previous two equations shows that the function defined by
\begin{equation}
  \label{eq:Dh_diff}
  \Dh_\tau(x, y) :=  \varphi^+_\tau(x, y) - \frac{\Fc(\varphi^+_{-1/\tau})(x, y)}{\tau} 
\end{equation}
also satisfies the differential equation \eqref{eq:D_diff}.
Note that $\delta(\hy \pm \hx) = \delta(\hy \pm \hx) \ebf ( \tfrac{\hy^2 - \hx^2}{2} )$ and $\delta(x) \ebf(\tfrac{x^2}{2} \tau) = \delta(x)$.
For each fixed $\tau_0 \in \Hc$, the function $\varphi^+_{\tau_0}$ is bounded with only jump singularities when either $x^2 = y^2$ or $x = 0$.
Proposition \ref{prop:FT_converge} implies that $\Fc(\varphi^+_\tau)$ has the same property as well.
So we can define $\Fc(\varphi_\tau^+)(x, y)$ on $x^2 = y^2$ such that $\Dc_\tau^+(x, y)$ is continuous when $y^2 = x^2 > 0$.
By Proposition \ref{prop:diff_Dn}, we know that $\Dh_\tau = \Dc_{\tau}$ and have proved the following result.
\begin{prop}
\label{prop:tvarphi}
For all $(x, y) \in \Rb^{1, 1}$, the $L^\infty(\Rb^{1, 1})$ function 
\begin{equation}
  \label{eq:varphi_mod}
  \tvarphi_\tau(x, y) := \varphi^+_\tau(x, y) - \varphi^*_\tau(x, y) = \sgn(x) \ebf \lp \frac{y^2 - x^2}{2} \rp \lp \mathds{1}_{y^2 > x^2} -\erfc(\sqrt{2\pi v} |x|)\rp
\end{equation}
satisfies 
\begin{enumerate}
\item $ \tvarphi_{\tau + 1}(x, y) = \ebf ( \tfrac{y^2 - x^2}{2}) \tvarphi_\tau(x, y)$,
\item $ \Fc(\tvarphi_{-1/\tau})(x, y) =  \tau \tvarphi_{\tau}(x, y)$,
\item $ \xi( \tvarphi_\tau(x, y)) = \varphi_\tau(x, y)$.
\end{enumerate}

\end{prop}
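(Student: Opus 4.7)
The plan is to verify the three properties in order of increasing substance, leveraging the explicit formulas for $\varphi^+_\tau$ and $\varphi^*_\tau$ together with the analysis of their Fourier transforms carried out in the two preceding subsections.

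Property (1) is immediate from inspection. Since $v = \Im(\tau) = \Im(\tau + 1)$, the functions $\erfc(\sqrt{2\pi v}|x|)$, $\sgn(x)$, and $\mathds{1}_{y^2 > x^2}$ are all unchanged under $\tau \mapsto \tau + 1$; the only genuinely $\tau$-dependent factor is $\ebf(\tfrac{y^2 - x^2}{2}\tau)$, which picks up exactly $\ebf(\tfrac{y^2 - x^2}{2})$. Property (3) is also quick: the function $\varphi^+_\tau$ is holomorphic in $\tau$ (its non-exponential factors $\sgn(x)$ and $\mathds{1}_{y^2 > x^2}$ are $\tau$-independent), so $\partial \varphi^+_\tau / \partial \bar\tau = 0$ and $\xi(\varphi^+_\tau) = 0$. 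Combining this with equation \eqref{eq:xi_image}, namely $\xi(\varphi^*_\tau) = -\varphi_\tau$, and the linearity of $\xi$, I get $\xi(\tvarphi_\tau) = \xi(\varphi^+_\tau) - \xi(\varphi^*_\tau) = \varphi_\tau$.

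Property (2) is the substantive assertion, and the strategy is to recognize that the Fourier-transform identity is equivalent to the equality $\Dn_\tau = \Dh_\tau$ of the two functions defined in \eqref{eq:Dn_diff} and \eqref{eq:Dh_diff}. The work already done above shows both satisfy the same first-order ODE \eqref{eq:D_diff}. To conclude from Proposition \ref{prop:diff_Dn} that they both equal $\Dc_\tau$, I need to verify that $\Dh_\tau$, like $\Dn_\tau$, is bounded in $x$ with its only jump discontinuity at $x = 0$. This is where Proposition \ref{prop:FT_converge} is invoked: it says $\Fc(\varphi^+_\tau)$ is bounded and continuously differentiable off $\{x^2 = y^2\}$, and $\varphi^+_\tau$ itself has the same structure, so $\Dh_\tau$ is bounded with possible jumps only at $x = 0$ and at $x^2 = y^2$. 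Because both $\varphi^+_\tau$ and $\Fc(\varphi^+_{-1/\tau})$ are continuous across $x^2 = y^2$ (choosing values on that set to make this so, as already remarked in the text), the jumps at $x^2 = y^2$ cancel in $\Dh_\tau$, leaving only the jump at $x=0$. Proposition \ref{prop:diff_Dn} then forces $\Dh_\tau = \Dc_\tau = \Dn_\tau$. Subtracting the two defining equations and using linearity of the Fourier transform gives
\[
\varphi^+_\tau - \varphi^*_\tau \;=\; \frac{1}{\tau}\bigl(\Fc(\varphi^+_{-1/\tau}) - \Fc(\varphi^*_{-1/\tau})\bigr) \;=\; \frac{1}{\tau}\,\Fc(\tvarphi_{-1/\tau}),
\]
which multiplied by $\tau$ is exactly (2).

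The main obstacle is the verification that $\Dh_\tau$ qualifies as a bounded piecewise-continuous solution in the sense of Proposition \ref{prop:diff_Dn}; this is where the somewhat delicate analysis of $\Fc(\varphi^+_\tau)$ in Proposition \ref{prop:FT_converge} is essential, since $\varphi^+_\tau$ is not Schwartz and the Fourier transform must be interpreted as a conditionally convergent distributional object. Once that hurdle is cleared, the remainder of the proof is a synthesis of the two uniqueness arguments plus linearity of $\Fc$.
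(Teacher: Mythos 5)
Your proof is correct and follows the paper's argument essentially verbatim: (1) by inspection, (3) from holomorphy of $\varphi^+_\tau$ in $\tau$ together with \eqref{eq:xi_image}, and (2) by showing that $\Dn_\tau$ and $\Dh_\tau$ are both bounded solutions of \eqref{eq:D_diff} whose only jump is at $x=0$, so both equal $\Dc_\tau$ by Proposition \ref{prop:diff_Dn}, and then subtracting the two defining relations. One small misstatement: $\varphi^+_\tau$ is \emph{not} continuous across $x^2=y^2$ (the factor $\mathds{1}_{y^2>x^2}$ genuinely jumps there, and no choice of values on that null set fixes this); the correct reason $\Dh_\tau$ has no jump on that locus --- and the one the paper uses --- is that the $\delta(x\pm y)$ source terms cancel between \eqref{eq:diff_eq_2} and its Fourier-transformed counterpart, so the right-hand side of \eqref{eq:D_diff} carries a delta only at $x=0$, and one then only adjusts the value of $\Fc(\varphi^+_\tau)$ \emph{on} the set $x^2=y^2$ to make $\Dh_\tau$ continuous there.
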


\section{Real-Analytic Theta Series}
In this section, we will construct weight 1 real-analytic theta series $\tvartheta(\tau)$ that transforms with respect to $\rho_{-L}$ and maps to $\vartheta(\tau)$ under $\xi$. 
Proposition \ref{prop:tvarphi} and the construction of $\vartheta_h(\tau)$ imply that we need to consider summing $\tvarphi_\tau(\iota_t(X))$ over $X \in L+ h$ and integrating over $\Rb^\times_+$ with respect to $\frac{dt}{t}$. 
However, the sum and integral are both divergent.
The problem with the sum is caused by isotropic elements in $L$.
We will regularize the sum by considering a slight shift of the lattice $L$, and regularize the integral by adding the converging factor $t^s$ as usual. 
The ideas are simple, but the procedure to carry it out is a bit complicated.

In the notations of sections \ref{sec:theta_lift} and \ref{sec:special_fn}, define the following series
\begin{equation}
  \label{eq:tTheta_varep}
  \begin{split}
\tTheta_h(\tau, t; \varepsilon, \varepsilon') 
&:=  \sum_{X \in L + h + \varepsilon \binom{1}{1}} \tvarphi_\tau \lp \iota_t(X) \rp
\ebf \lp B \lp X, \varepsilon' \binom{1}{1} \rp \rp
      \end{split}
\end{equation}
for $\varepsilon, \varepsilon' \in (-\half, \half)$. 
This series converges for $\varep, \varep' \in (-\half, \half)$. It is modular for $\varep, \varep' \in (-\half, \half) \backslash \{0\}$, but not continuous at $\varep = 0$. 
Define $\Theta^*_h$ and $\Theta^+_h$ as $\tTheta_h$ in \eqref{eq:tTheta_varep} with $\tvarphi_\tau$ replaced by $\varphi_\tau^*$ and $\varphi^+_\tau$ respectively.
To preserve the modularity, we define the theta series $\tTheta(\tau, t) = \sum_{h \in L^*/L} \tTheta_h(\tau, t) \ef_h$ by
\begin{equation}
  \label{eq:tTheta}
  \tTheta_h(\tau, t) := c_h(0) + \tTheta_h(\tau, t; 0, 0) = c_h(0) + \Theta^+_h(\tau, t; 0, 0) + \Theta^*_h(\tau, t; 0, 0),
\end{equation}
where $c_h(0)$ is defined in Proposition \ref{prop:FE}.
They have the following relationship.
\begin{prop}
  \label{prop:tTheta_converge}
For fixed $\tau \in \Hc$, $t \in \Rb^\times_+$ and $h = \binom{h_1}{h_2} \in \Zb^2$, the series $\tTheta_h(\tau, t; \varepsilon, \varepsilon')$ converges uniformly for $(\varepsilon, \varepsilon')$ in compact subsets of $(-\half, \half) \backslash \{ 0\} \times (-\half, \half)$. 
It is continuous for $(\varep, \varep') \in (0, \min\{\tfrac{1}{1+t^2}, \tfrac{t^2}{1+t^2} \}) \times (-\half, \half)$ and satisfies
\begin{equation}
  \label{eq:tTheta_limit}
  \lim_{\varep \to 0^+} \tTheta_h(\tau, t; \varep, \pm \varep) - \frac{c_{-1}(h)}{2 \pi i (\tau \mp 1) \varep} = \tTheta_h(\tau, t),
\end{equation}
where $c_{-1}(h)\in \{0, 1, 2\}$ is the number of $h_1, h_2$ that are divisible by $N$.
\end{prop}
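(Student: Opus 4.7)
The plan is to use the decomposition $\tvarphi_\tau = \varphi^+_\tau - \varphi^*_\tau$ from Proposition~\ref{prop:tvarphi}, which induces
\[
\tTheta_h(\tau, t; \varep, \varep') = \Theta^+_h(\tau, t; \varep, \varep') - \Theta^*_h(\tau, t; \varep, \varep'),
\]
and to analyze the two pieces separately. The Schwartz-like summand of $\Theta^*_h$ will be continuous through $\varep = 0$; all of the singular behavior as $\varep \to 0^+$ will live in $\Theta^+_h$.

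First, $\Theta^*_h$ converges absolutely and uniformly on compact subsets of $(-\half, \half)^2$: the summand $\varphi^*_\tau(\iota_t(X))$ has Gaussian decay in both coordinates of $\iota_t(X)$, because the $\erfc(\sqrt{2\pi v}|\hx|)$ factor supplies Gaussian decay in $\hx$ that dominates the $e^{\pi v \hx^2}$ growth inside $\ebf((\hy^2 - \hx^2)\tau/2)$, while the remaining $e^{-\pi v \hy^2}$ decays in $\hy$. Hence $\Theta^*_h$ is continuous through $\varep = 0$, and its limit equals $\Theta^*_h(\tau, t; 0, 0)$.

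For $\Theta^+_h$, the function $\varphi^+_\tau$ is supported on $\{\hy^2 > \hx^2\}$, which under $\iota_t$ pulls back to the cone $x_1 x_2 < 0$, and on that support has modulus $e^{-2\pi v |x_1 x_2|/N}$. Absolute convergence thus demands $|x_1 x_2|$ bounded away from $0$. Writing $X = \binom{h_1 + \varep + Na}{h_2 + \varep + Nb}$, a coordinate $|h_i + \varep + Na_i|$ approaches $0$ as $\varep \to 0^+$ only when $N \mid h_i$ and the corresponding index vanishes. Intersected with the support condition $x_1 x_2 < 0$, this isolates exactly $c_{-1}(h)$ ``singular series'' of indices: $\{(0, -k) : k \ge 1\}$ when $N \mid h_1$, and $\{(-k, 0) : k \ge 1\}$ when $N \mid h_2$. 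Away from these the exponential bound yields absolute and uniform convergence on compact subsets of $\lp (-\half, \half)\setminus\{0\} \rp \times (-\half, \half)$; on the restricted box of the proposition this convergence extends through $\varep = 0^+$ and recovers the non-singular portion of $\Theta^+_h(\tau, t; 0, 0)$ (the singular indices themselves contribute nothing at $\varep = 0$ because they lie on $x_1 x_2 = 0$).

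On each singular series, the condition $\varep \in (0, \min\{\tfrac{1}{1+t^2}, \tfrac{t^2}{1+t^2}\})$ is exactly what forces $\sgn(x_1/t + x_2 t) = -1$ throughout the sum, so the summand takes the form $-\ebf(k\varep(\tau \mp 1))(1 + O(\varep))$ for $\varep' = \pm \varep$, and the geometric sum in $k \ge 1$ admits the Laurent expansion
\[
\frac{1}{2\pi i(\tau \mp 1)\varep} + C_i(h) + O(\varep)
\]
around $\varep = 0$, where the finite constant $C_i(h)$ is extracted from a first-order Taylor expansion of the shift $\ebf(-h_i\varep(\tau \mp 1)/N)$. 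Summing the $c_{-1}(h)$ singular series produces the claimed pole $\frac{c_{-1}(h)}{2\pi i(\tau \mp 1)\varep}$, and the accumulated $C_i(h)$, together with $\Theta^+_h(\tau, t; 0, 0)$ and $-\Theta^*_h(\tau, t; 0, 0)$, reproduce $c_h(0) + \tTheta_h(\tau, t; 0, 0) = \tTheta_h(\tau, t)$. The main obstacle is this final bookkeeping: one must verify case by case (on how many of $h_1, h_2$ are divisible by $N$) that the accumulated finite constants match $c_h(0)$ as listed in Proposition~\ref{prop:FE}, and for $h = 0$ one additionally uses the identity $\Theta^+_0(\tau, t; 0, 0) = 0$ coming from the $(a, b) \leftrightarrow (-a, -b)$ pairing on opposite-sign indices.
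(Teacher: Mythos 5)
Your overall route is the paper's: split $\tTheta_h = \Theta^+_h - \Theta^*_h$, get uniform convergence of $\Theta^*_h$ from Schwartz decay, isolate the $c_{-1}(h)$ ``singular series'' of $\Theta^+_h$ supported on $x_1x_2=0$, sum each as a geometric series, and read off the pole and constant term from a Laurent expansion. The cone computation ($y^2>x^2$ pulling back to $x_1x_2<0$, modulus $e^{-2\pi v|x_1x_2|/N}$), the role of the condition $\varep<\min\{\tfrac{1}{1+t^2},\tfrac{t^2}{1+t^2}\}$ in fixing the sign, and the shape of the pole all match the paper. The deferred ``bookkeeping'' is also less of an obstacle than you suggest: the paper does it in one line via $-\tfrac{e^{ax}}{1-e^x}=x^{-1}-(\half-a)+O(x)$ with $a=\langle-\tfrac{h_j}{N}\rangle$, and $-\half+\langle-\tfrac{h_j}{N}\rangle=c_h(0)$ by $\langle-x\rangle=1-\langle x\rangle$.

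There is, however, one genuine error: the claim that $\Theta^*_h$ is continuous through $\varep=0$ is false for $h=\binom{0}{0}$. The summand $\varphi^*_\tau(x,y)=\ebf\lp\tfrac{y^2-x^2}{2}\tau\rp\sgn(x)\erfc(\sqrt{2\pi v}|x|)$ has a jump of size $2$ across $x=0$ (the $\erfc$ factor equals $1$ there, not $0$), and for $h=0$ the lattice term $X=0$ gets shifted to $\varep\binom{1}{1}$, whose image under $\iota_t$ has first coordinate $\tfrac{\varep(t^{-1}+t)}{\sqrt{2N}}>0$; hence that single term tends to $1$ as $\varep\to 0^+$ while it equals $0$ at $\varep=0$. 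So $\lim_{\varep\to0^+}\Theta^*_0(\tau,t;\varep,\pm\varep)=\Theta^*_0(\tau,t;0,0)+1$, as the paper explicitly records. This extra $+1$ is exactly what cancels the doubled constant term $2\cdot\half=c_0(0)+1$ coming from the two singular series of $\Theta^+_0$ when $N\mid h_1$ and $N\mid h_2$ both hold; with your continuity claim the two sides of \eqref{eq:tTheta_limit} would differ by $1$ in the $h=0$ case, and your appeal to $\Theta^+_0(\tau,t;0,0)=0$ (true, but about a different quantity) does not repair this. Everything else in the proposal is sound once this exceptional term is accounted for.
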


\begin{proof}
Since $\varphi^*_\tau$ decays like a Schwartz function, the series $\Theta_h^*$ converges absolutely and uniformly for $\varep, \varep' \in \Rb$, except for $h = \binom{0}{0}$, in which case, we have $\lim_{\varep \to 0^+} \Theta_h^*(\tau, t; \varep, \pm \varep) = \Theta_h^*(\tau, t; 0, 0) + 1$. 
For $\Theta^+_h$, notice that $B(X, Z_t)^2 - B(X, W_t)^2 = -\frac{2 x_1 x_2}{N}$ if $X = \binom{x_1}{x_2} \in V_\Rb$. 
So we can write
\begin{equation}
\label{eq:Theta_plus}
\begin{split}
\Theta^+_h(\tau, t; \varep, \varep') & = 
\sum_{\begin{subarray}{c} 
n_1 \in N\Zb + h_1  \\
n_2 \in N\Zb + h_2  \\
n_1 n_2 \le -1 \end{subarray}} 
+
\sum_{\begin{subarray}{c} 
n_1 \in N\Zb + h_1  \\
n_2 \in N\Zb + h_2  \\
(n_1 + \varep) (n_2 + \varep) < 0 \\
n_1 n_2 = 0 \end{subarray}} \\
& \sgn ( t^{-1}(n_1 + \varep) + t(n_2 + \varep))
\ebf \lp - \frac{(n_1 + \varep)(n_2 + \varep)}{N} \tau \rp 
\ebf \lp \frac{(n_1 + n_2 + 2\varep)}{N} \varep'\rp.
\end{split}
\end{equation}
Using the inequality $-(n_1 + \varep )(n_2 + \varep) > -\frac{n_1n_2}{2}$ for $\varep \in (-\half, \half)$, we see that the first sum in equation \eqref{eq:Theta_plus} converges absolutely and uniformly for $(\varep, \varep')$ in compact subset of $(-\half, \half)^2$.
Note that the second sum is empty if and only if $N \nmid h_j$ for $j = 1, 2$.
Suppose $N \mid h_1$ and $\varep > 0$.
Then $n_2 \le -1$ and the summand becomes 
$$
\sgn ( t^{-1} \varep + t(n_2 + \varep))
\ebf \lp  \frac{\varep \tau - \varep' }{N} (-n_2) \rp 
\ebf \lp  \frac{2\varep \varep' - \varep^2 \tau}{N} \rp
$$
Since $t > 0$, $\varep < \min\{ \tfrac{1}{1+t^2}, \tfrac{t^2}{1+t^2} \}$ and $n_2 \le -1$ , we have $t^{-1} \varep + t(n_2 + \varep) < t^{-1} \varep + t(-1 + \varep) < 0$.
Then the second sum is just a geometric series and equals to
$$
- \ebf \lp \frac{2\varep \varep' - \varep^2 \tau}{N}  \rp 
\frac{\ebf\lp  (  \varep \tau -   \varep' ) \langle - \frac{h_2}{N} \rangle \rp}{1 - \ebf \lp \varep \tau -     \varep' \rp}.
$$
Using the power series expansion $- \frac{e^{ax}}{1 - e^x} =  x^{-1} - (\half - a) + O(x)$, we see that we get a constant term $- \half + \langle - \tfrac{h_2}{N} \rangle$ when $\varep' = \pm \varep$.
When $h \neq \binom{0}{0}$, this is just $c_h(0)$. 
When $h = \binom{0}{0}$, there are twice this contribution, which sums to $c_h(0) + 1$.
Now we are done since $\tTheta_h(\tau, t; \varep, \varep') = \Theta^+_h(\tau, t; \varep, \varep') - \Theta^*_h(\tau, t; \varep, \varep')$.
\end{proof}

\begin{prop}
\label{prop:tTheta_mod}
The theta function $\tTheta(\tau, t)$ is a real-analytic modular form in $\tau$ of weight 1 with respect to $\rho_{-L}$ and satisfies $\xi(\tTheta(\tau, t)) = \Theta(\tau, t)$ and $\tTheta_h(\tau, t) = O_\tau(1)$ for all $t \in \Rb^\times_+$.
\end{prop}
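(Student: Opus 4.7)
The plan is to derive all three claims from the regularized series $\tTheta_h(\tau,t;\varep,\varep')$, which converges absolutely and uniformly on compacts by Proposition \ref{prop:tTheta_converge}, and then pass to the limit $\varep \to 0^+$ with the explicit polar subtraction recorded there.

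I would first establish modularity of the regularization on the generators $T$ and $S$ of $\SL_2(\Zb)$. The $T$-transformation is termwise using part (1) of Proposition \ref{prop:tvarphi}, together with the identity $\tfrac{1}{2}(y^2 - x^2)\big|_{(x,y)=\iota_t(X)} = -Q(X)$ (since $\iota_t$ is an isometry from $(V_\Rb,B)$ to $(\Rb^{1,1},-B')$). Summing over $X \in L+h+\varep\binom{1}{1}$ and absorbing the resulting $\ebf(-(y_1+y_2)\varep/N)$-factor into a shift of $\varep'$ reorganizes the sum as $\rho_{-L}(T)\tTheta$. For $S$, apply Poisson summation to the decomposition $\tvarphi_\tau = \varphi^+_\tau - \varphi^*_\tau$: the Schwartz term $\varphi^*_\tau$ is classical, and for the $L^\infty$ term $\varphi^+_\tau$ the Fourier transform is controlled by Proposition \ref{prop:FT_converge}, the shift by $\varep\binom{1}{1}$ supplying enough decay on the dual lattice to justify the summation. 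Part (2) of Proposition \ref{prop:tvarphi} then produces the automorphy factor $\tau$ and the action of $\rho_{-L}(S)$.

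Next, take $\varep \to 0^+$ with $\varep' = \pm\varep$. Proposition \ref{prop:tTheta_converge} identifies the singular part as $c_{-1}(h)/(2\pi i(\tau\mp 1)\varep)$, which is holomorphic in $\tau$ and depends on $h$ only through the $\rho_{-L}$-equivariant datum $c_{-1}(h)$. One checks that these polar terms transform consistently under $T$ and $S$, so subtracting them preserves the modular law; adding back the constant $c_h(0)$ recovers \eqref{eq:tTheta} and gives modularity of $\tTheta(\tau,t)$ with respect to $\rho_{-L}$. The $\xi$-equivariance then follows at once: the subtracted pole is holomorphic in $\tau$ and hence killed by $\xi = 2iv\overline{\partial_{\bar\tau}}$, while part (3) of Proposition \ref{prop:tvarphi} gives $\xi\tvarphi_\tau(\iota_t(X)) = \varphi_\tau(\iota_t(X))$ termwise. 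The absolute and uniform convergence of $\Theta_h(\tau,t)$ then allows interchange of $\xi$ with both the summation and the $\varep$-limit, so $\xi\tTheta_h = \Theta_h$.

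For the bound $\tTheta_h(\tau,t) = O_\tau(1)$, I would estimate $\varphi^+$ and $\varphi^*$ separately. For $\varphi^*_\tau$, the asymptotic $\erfc(\sqrt{2\pi v}|x|)\,e^{\pi v x^2} \ll (\sqrt{v}|x|)^{-1} e^{-\pi v x^2}$ gives $|\varphi^*_\tau(x,y)| \ll_\tau e^{-\pi v(x^2+y^2)} = e^{-2\pi v\, Q(X)_t}$, and the majorant theta $\sum_X e^{-2\pi v Q(X)_t}$ is bounded uniformly in $t$ on any compact range of $t$, with the regimes $t \to 0,\infty$ handled explicitly by splitting into the $n_1 = 0$ and $n_2 = 0$ sub-series. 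For $\varphi^+_\tau$, the decomposition \eqref{eq:Theta_plus} isolates an absolutely convergent sum over $n_1n_2 \le -1$, bounded by $e^{-2\pi v |n_1n_2|/N}$, plus an isotropic contribution whose regularized value is precisely absorbed into $c_h(0)$. The main obstacle will be the consistency check for the polar subtraction: verifying that the poles at $\tau = \pm 1$ and the coefficients $c_{-1}(h)$ transform in a matched way under $T$ and $S$, so that the renormalized limit is genuinely modular; this requires careful bookkeeping of the sign of $\varep'$ relative to $\varep$ on both sides of each transformation equation.
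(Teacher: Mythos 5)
Your proposal is correct and follows essentially the same route as the paper: Poisson summation applied to the regularized series $\tTheta_h(\tau,t;\varep,\varep')$, matching of the polar terms at $\varep\to 0^+$ via the finite-Fourier-transform identity for $c_{-1}(h)$, termwise application of $\xi$ using Proposition \ref{prop:tvarphi}(3), and boundedness in $t$ from the decay of $\varphi^*_\tau$ together with the decomposition \eqref{eq:Theta_plus}. The paper's proof is simply a terser version of the same argument (it treats the $T$-modularity and the $O_\tau(1)$ bound as immediate from the definitions), so no substantive comparison is needed.
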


\begin{proof}
The property $\xi(\tTheta_h(\tau, t)) = \Theta_h(\tau, t)$ and the modularity in $T$ are clear from the definition. For the modularity in $S$, we can apply Poisson summation to obtain 
\begin{equation}
  \label{eq:tTheta_varep_mod}
  \frac{\tTheta_h(-1/\tau, t; \varepsilon, \varep')}{\tau} =
\frac{\ebf(2\varep \varep')}{N}
 \sum_{\delta \in L^*/L} \ebf( (\delta, h))
 \tTheta_\delta(\tau, t; -\varep', \varep)
\end{equation}
with $\varep, \varep' \in (-\half, \half) \backslash \{0\}$.
Using the identity $c_{-1}(h) = \frac{\sum_{\delta \in L^*/L} \ebf((\delta, h)) c_{-1}(\delta)}{N}$, we obtain the desired modularity with respect to $S$ after setting $\varep' = -\varep$, subtracting $\frac{c_{-1}(h)}{2 \pi i  (\tau - 1) \varep}$ from both sides and taking the limit $\varep \to 0^+$. 
The asymptotic of $\tTheta(\tau, t)$ in $t$ can be seen from its definition, the decay of $\varphi^*_\tau$, and the expression \ref{eq:Theta_plus}.
\end{proof}

Now to construct the preimage of $\vartheta_h(\tau)$ under $\xi$, we consider the integral 
\begin{equation}
\label{eq:tvarthetas}
\tvartheta_h(\tau; s) := \int^\infty_1 \tTheta_h(\tau, t) t^{-s} \frac{dt}{t}
+ \int^1_0 \tTheta_h(\tau, t) t^{s} \frac{dt}{t},
\end{equation}
which converges for $\Re(s) > 0$ and can be analytically continued to $s \in \Cb$ via its Fourier expansion in $\tau$. 
We are interested in the function 
\begin{equation}
  \label{eq:tvartheta}
  \tvartheta_h(\tau) := \mathrm{Const}_{s = 0} \tvartheta_h(\tau; s).
\end{equation}
It has the following desirable properties.
\begin{thm}
  \label{thm:tvartheta}
The function $\tvartheta(\tau) := \sum_{h \in L^*/L} \tvartheta_h(\tau) \ef_h$ is a harmonic Maass form of weight 1 with respect to $\rho_{-L}$, and maps to the Eisenstein series $\vartheta(\tau)$. 
It has the Fourier expansion
$  \tvartheta_h(\tau) = \sum_{n \in \Qb_{\ge 0}} \tc_h(n) q^n + c_h(0) \log v - \sum_{n \in \Qb_{> 0}} c_h(n) \Gamma(0, 4\pi v n) q^{-n}
$, where $c_h(n) \in \Qb$ are defined in Eq.\ \eqref{eq:FC} and $\tc_h(n)$ are defined by
\begin{equation}
  \label{eq:tvartheta_hol_FC}
  \begin{split}
    \tc_h(0) & := 
    \begin{cases}
      c_h(0) \lp \log (\pi N) - \frac{\Gamma'(1/2)}{\Gamma(1/2)} \rp - 
\log \Gamma \lp \langle \frac{h_1}{N} \rangle \rp + \log \Gamma \lp - \langle \frac{h_1}{N} \rangle \rp, &N \mid h_2, \\
      c_h(0) \lp \log (\pi N) - \frac{\Gamma'(1/2)}{\Gamma(1/2)} \rp - 
\log \Gamma \lp \langle \frac{h_2}{N} \rangle \rp + \log \Gamma \lp - \langle \frac{h_2}{N} \rangle \rp, & N \mid h_1, \\
0, & \text{otherwise,}
    \end{cases}
\\
\tc_h(n) &:= \sum_{X = \binom{x_1}{x_2} \in L + h, \; -Q(X) = n} \sgn(x_1) \log \left| \frac{x_1}{x_2} \right|, \quad n > 0.
  \end{split}
\end{equation}
\end{thm}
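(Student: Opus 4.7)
The plan is to derive all three claims from Proposition \ref{prop:tTheta_mod} by combining it with the regularized $t$-integration in \eqref{eq:tvarthetas}. Since $\tTheta_h(\tau, t)$ transforms with weight $1$ and representation $\rho_{-L}$ in $\tau$, and the measure $t^{\pm s}\frac{dt}{t}$ is insensitive to the $\SL_2(\Zb)$-action on $\tau$, both $\tvartheta_h(\tau; s)$ and its constant term at $s=0$ transform in the same way; the analytic continuation in $s$ to a neighborhood of $0$ is furnished by the Fourier expansion in $\tau$. Similarly, $\xi_1$ differentiates in $\overline{\tau}$ only and commutes with the $t$-integral, so the identity $\xi \tTheta_h = \Theta_h$ from Proposition \ref{prop:tTheta_mod} gives $\xi \tvartheta_h(\tau; s) = \vartheta_h(\tau; s)$, and passing to constant terms at $s = 0$ produces $\xi \tvartheta = \vartheta$. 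Harmonicity is then immediate since $\vartheta$ is holomorphic: $\Delta_1 \tvartheta = -\xi_1 \xi_1 \tvartheta = -\xi_1 \vartheta = 0$.

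The Fourier expansion is computed term-by-term from the pieces $c_h(0)$, $\Theta^+_h(\tau, t; 0, 0)$, and $\Theta^*_h(\tau, t; 0, 0)$ of \eqref{eq:tTheta}. Writing $\iota_t(X) = (\alpha, \beta)$ one has $\beta^2 - \alpha^2 = -2 Q(X)$ and $\sgn(\alpha) = \sgn(t^{-1} x_1 + t x_2)$, so $\varphi^+_\tau(\iota_t X) = \sgn(\alpha) q^{-Q(X)} \mathds{1}_{Q(X) < 0}$ and $\varphi^*_\tau(\iota_t X) = \sgn(\alpha) \erfc(\sqrt{2\pi v}|\alpha|) q^{-Q(X)}$. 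For the holomorphic coefficient, take $X$ with $Q(X) = -n < 0$: the sign $\sgn(\alpha(X, t))$ flips at $t_0 = \sqrt{|x_1/x_2|}$, equaling $\sgn(x_1)$ for $t < t_0$ and $\sgn(x_2)$ for $t > t_0$; the piecewise evaluation of $\int_0^1 t^{s-1}dt + \int_1^\infty t^{-s-1}dt$ has constant term $\sgn(x_1)\log|x_1/x_2|$ at $s = 0$, which when summed over $X$ with $Q(X) = -n$ yields $\tc_h(n) q^n$. For the non-holomorphic coefficient, take $X$ with $Q(X) = n > 0$: now $\sgn(\alpha) = \sgn(x_1)$ is constant in $t$, and the substitution $t = r \sqrt{x_1/x_2}$ turns the $\erfc$-integral into $\int_0^\infty \erfc(\sqrt{\pi v n}(r + r^{-1})) \frac{dr}{r}$, which equals $\Gamma(0, 4\pi v n)$ (verify by differentiating both sides in $\sqrt{vn}$: each reduces to $-2 e^{-4\pi v n}/\sqrt{vn}$ after a Gaussian integration, and both vanish at $\infty$). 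With the appropriate sign this produces $-c_h(n) \Gamma(0, 4\pi v n) q^{-n}$. The $\Theta^*_h$-contribution at $Q(X) = -n < 0$ vanishes at $s = 0$ by the parity $\theta \mapsto -\theta$ under the substitution $t = t_0 e^\theta$, so it does not affect the holomorphic coefficients.

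The constant coefficient is the main obstacle, since it requires regularizing the sum over isotropic vectors and the $t$-integration simultaneously. The isotropic $X \in L + h$ are of two kinds: $\binom{x_1}{0}$ with $x_1 \neq 0$ (present when $N \mid h_2$) and $\binom{0}{x_2}$ with $x_2 \neq 0$ (present when $N \mid h_1$); the two are interchanged by $t \mapsto 1/t$, so I may focus on the first. Evaluating the regularized $t$-integral as a Mellin transform of $\erfc$ and interchanging it with the sum over $x_1$ leads, after analytic continuation in $s$, to an expression proportional to $\alpha^{-s} N^{-s}\bigl(H(s, \langle h_1/N \rangle) - H(s, 1 - \langle h_1/N \rangle)\bigr) \cdot \frac{\Gamma((s+1)/2)}{s\sqrt{\pi}}$ with $\alpha = \sqrt{\pi v/N}$ and $H(s, x)$ the Hurwitz zeta function. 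Its simple pole at $s = 0$ cancels exactly against the $\pm 2 c_h(0)/s$ pole from integrating the constant $c_h(0)$, because $H(0, x) - H(0, 1-x) = 2 c_h(0)$. The constant term in the Laurent expansion is then extracted from three ingredients: (a) Lerch's formula $H'(0, x) = \log \Gamma(x) - \half \log(2\pi)$, which produces the $\log \Gamma$ terms; (b) $\Gamma((s+1)/2)/\sqrt{\pi} = 1 + (s/2)\frac{\Gamma'(1/2)}{\Gamma(1/2)} + O(s^2)$, giving the $\frac{\Gamma'(1/2)}{\Gamma(1/2)}$ contribution; and (c) $\alpha^{\pm s} = 1 \pm (s/2)\log(\pi v / N) + O(s^2)$, whose $\log v$ part is exactly what yields the $c_h(0) \log v$ summand, while $\log(\pi/N)$ combines with the $N^{\pm s}$-expansions into the $\log(\pi N)$ appearing in $\tc_h(0)$. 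The remaining cases $N \nmid h_1, N \nmid h_2$ (no isotropic contribution, so $\tc_h(0) = 0 = c_h(0)$) and $N \mid h_1, N \mid h_2$ (both kinds of isotropic vectors occur and cancel pairwise under $x \leftrightarrow -x$, again giving $\tc_h(0) = 0$) are consistent with the stated formula.
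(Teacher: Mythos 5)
Your proposal is correct and follows essentially the same route as the paper: split $\tTheta_h$ into $c_h(0)+\Theta^+_h-\Theta^*_h$, integrate term by term, evaluate the non-isotropic contributions via the sign-flip/odd-symmetry of the $t$-integral and the identity $\int_0^\infty \erfc(\alpha(w+w^{-1}))\tfrac{dw}{w}=\Gamma(0,4\alpha^2)$, and handle the isotropic vectors through the Mellin transform of $\erfc$, the Hurwitz zeta function, and Lerch's formula. The only addition beyond the paper is your observation that the $2c_h(0)/s$ poles cancel, which is a correct and pleasant bonus.
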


\begin{proof}
The modularity statement follows from Prop.\ \ref{prop:tTheta_mod}.
For the Fourier expansion, we will first calculate the contribution of $\Theta^*_h$ in the integral defining $\tvartheta_h(\tau, s)$, i.e.
$$
\vartheta^*_h(\tau, s) := \int^\infty_1 \Theta^*_h(\tau, t; 0, 0) t^{-s} \frac{dt}{t}
+ \int^1_0 \Theta^*_h(\tau, t; 0, 0) t^{s} \frac{dt}{t}.
$$
Since the sum defining $\Theta^*_h(\tau, t; 0, 0)$ converges absolutely and uniformly in $t$, we can switch the sum and integral. It is then suffices to consider the integral 
\begin{equation}
\label{eq:I*h}
I_h^*(X, \tau, s) := \int^\infty_1 \varphi^*_{\tau}(\iota_t(X)) t^{-s} \frac{dt}{t} + \int^1_0 \varphi^*_{\tau}(\iota_t(X)) t^{s} \frac{dt}{t}
\end{equation}
for each $X = \binom{x_1}{x_2} \in L + h$. 
If $n := Q(X) = \frac{x_1x_2}{N} \neq 0$, then $\mathrm{Const}_{s = 0} I^*_h(X, \tau, s) = I^*_h(X, \tau, 0)$ and
\begin{align*}
  I^*_h(X, \tau, 0) &= \int^\infty_0 \varphi^*_\tau(\iota_t(X))  \frac{dt}{t}
= \ebf \lp - n \tau \rp \int^\infty_0 \sgn(x_1 t^{-1} + x_2 t) \erfc \lp \sqrt{\frac{\pi v}{N}} |x_1 t^{-1} + x_2 t| \rp \frac{dt}{t} \\
&= \sgn(x_1)\ebf \lp - n \tau \rp \int^\infty_0 \sgn(w^{-1} + \sgn(n) w) \erfc \lp \sqrt{n \pi v} | w^{-1} + \sgn(n) w| \rp \frac{dw}{w},
\end{align*}
where $w = {\left| \frac{x_2}{x_1} \right|}^{1/2} t$.
For any $\alpha > 0$, we have $\int^\infty_0 \sgn(w^{-1} - w) \erfc(\alpha|w^{-1}  - w|) \frac{dw}{w} = 0$ and 
$$
\int^\infty_0  \erfc(\alpha(w^{-1}  + w)) \frac{dw}{w} = \Gamma(0, 4\alpha^2),
$$
with $\Gamma(s, x) := \int^\infty_x t^{s-1} e^{-t} dt$ the incomplete gamma function.
Therefore, we have
$$
\mathrm{Const}_{s = 0} I^*_h(X, \tau, s) = \sgn(x_1) \ebf(-Q(X) \tau) \Gamma(0, 4\pi Q(X) v)
$$
when $X$ is not isotropic.

When $Q(X) = 0$, we know that $I^*_h(X, \tau, s) = 0$ if $x_1 = x_2 = 0$. So suppose $x_1 \neq 0$ and $x_2 = 0$. 
Simple estimate shows that 
$$
\left| I^*_h\lp  \binom{x_1}{0}, \tau, s \rp -
\sgn(x_1 )\int^\infty_0  \erfc \lp \sqrt{\frac{\pi v}{N}} |x_1 t^{-1}| \rp t^{-s} \frac{dt}{t} \right| \ll e^{-c x_1^2} \cdot \int^\infty_1 e^{-c (t^2-1)} |t^s - t^{-s}| \frac{dt}{t},
$$
where $c = \tfrac{\pi v }{N}$. 
Using the formula $\int^\infty_0 \erfc(\alpha t) t^s \frac{dt}{t} = \frac{\alpha^{-s}}{\sqrt{\pi} s} \Gamma \lp \frac{s+1}{2} \rp$, we have
\begin{align*}
 \sum_{\begin{subarray}{c} x_1 \equiv h_1 (N)\\ x_1 \neq 0 \end{subarray}} I^*_h \lp  \binom{x_1}{0}, \tau, s \rp
&=  \frac{(N \pi v)^{-s/2}}{\sqrt{\pi} s} \Gamma \lp \frac{s+1}{2} \rp
\lp H \lp s, \langle \frac{h_1}{N} \rangle \rp -  H \lp s, \langle - \frac{h_1}{N} \rangle \rp \rp.
\end{align*}
The constant term at $s = 0$ of the right hand side is then given by 
$-c_h(0) \log v - \tc_h(0)$.

Now, we will consider the following integral 
$$
\vartheta^+_h(\tau, s) := \int^\infty_1 \Theta^+_h(\tau, t; 0, 0) t^{-s} \frac{dt}{t}
+ \int^1_0 \Theta^+_h(\tau, t; 0, 0) t^{s} \frac{dt}{t}.
$$
By the definition of $\varphi^+_\tau$, it suffices to calculate as before
\begin{equation}
\label{eq:I+h}
I_h^+(X, \tau, s) := \int^\infty_1 \varphi^+_{\tau}(\iota_t(X)) t^{-s} \frac{dt}{t} + \int^1_0 \varphi^+_{\tau}(\iota_t(X)) t^{s} \frac{dt}{t}
\end{equation}
for $X \in L + h$ with $- Q(X) > 0$. 
For $X = \binom{x_1}{x_2}$, this simplifies to
\begin{align*}
I^+_h(X, \tau, s) &= \ebf(-Q(X) \tau) \lp \int^\infty_1 \sgn(t^{-1} x_1 + t x_2) t^{-s} \frac{dt}{t} + \int^1_0 \sgn(t^{-1} x_1 + t x_2) t^s \frac{dt}{t} \rp \\
&= \sgn(x_1) \ebf(-Q(X) \tau) \lp r^s \int^\infty_r \sgn(w^{-1} - w) w^{-s} \frac{dw}{w} + r^{-s} \int^r_0 \sgn(w^{-1} - w) w^s \frac{dw}{w} \rp 
\end{align*}
after a change of variable $w = r \cdot t, r = |x_2/x_1|^{1/2}$.
For $\Re(s) > 0$, we then have $I^+_h(X, \tau, s) = \sgn(x_1) \ebf(-Q(X)\tau) \frac{2(r^{-s} - 1)}{s}$. Taking the limit as $s$ goes to 0 then finishes the calculation.
\end{proof}

\bibliography{HEisen}{}
\bibliographystyle{amsplain}

\end{document}